\newtheorem{Problem}{Problem}
\newtheorem{Theorem}{Theorem}
\newtheorem{Proposition}{Proposition}
\newtheorem{Conjecture}{Conjecture}
\newtheorem{Corollary}{Corollary}
\newtheorem{Algorithm}{Algorithm}
\def\eatspace#1{#1}
\def\step#1#2{\par\kern1pt\dimen44=#2em\advance\dimen44 1.67em\hangindent\dimen44\hangafter=1\noindent\rlap{\small#1}\kern\dimen44\relax\eatspace}
\let\set\mathbb
\def\<#1>{\langle#1\rangle}
\theoremstyle{definition}
\newtheorem{Definition}{Definition}
\newtheorem{Example}{Example}
\def\testb#1{\testb@i#1,,\@nil}%
\def\testb@i#1,#2,#3\@nil{%
  \draw[->] (O) --++(#1);
  \ifx\relax#2\relax\else\testb@i#2,#3\@nil\fi}
\keywords{Newton-Puiseux algorithm, algebraic series, effective arithmetic}
\begin{document}

\title{The Newton-Puiseux Algorithm and Effective Algebraic Series}

\maketitle

\begin{center}
\begin{tabular}{@{}c@{}}
    Manfred Buchacher \\
    Johannes Kepler Universit{\"a}t Linz\\
    \normalsize manfredi.buchacher@gmail.com
  \end{tabular}%
  \end{center}

\begin{abstract}
We explain how to encode an algebraic series by finite data
and how to do effective arithmetic on the level of these encodings. The reasoning is based on the Newton-Puiseux algorithm and an effective equality test for algebraic series. Furthermore, we discuss how to derive information about the support of an algebraic series.
Based thereon, we show how to identify the polynomial and rational solutions of a polynomial equation.  
\end{abstract}

\section{Overview}

Given a polynomial equation in two unknowns over an algebraically closed field of characteristic zero, the Newton-Puiseux algorithm~\cite{puiseux1850recherches,brieskorn2012plane} solves the equation for one of the unknowns in terms of series in the other by computing them term by term. 
Finding a series solution of a polynomial equation is one and the most apparent aspect of the algorithm. However, it also permits to encode algebraic series by finite data and to effectively compute with them on the level of these encodings. While this is well-known for univariate algebraic series, this is not the case for algebraic series that are multivariate. We explain how to do effective arithmetic with such series and complement the discussion of the Newton-Puiseux algorithm for multivariate, not necessarily bivariate polynomials over a field of characteristic zero in~\cite{MacDonald}. We also explain that the convex hull of the support of an algebraic series is a polyhedral set and discuss how its vertices and bounded faces can be computed. Based thereon, we provide a simple way to identify the polynomial and rational solutions of a polynomial equation. The article comes with a Mathematica implementation of the Newton-Puiseux algorithm and a Mathematica notebook that contains the examples presented here. They can be found on~https://github.com/buchacm/newtonPuiseuxAlgorithm. 

Further literature related to the Newton-Puiseux algorithm, effective algebraic series or supports of algebraic series: a generalization of the Newton-Puiseux algorithm to systems of polynomial equations over a field of characteristic zero is presented in~\cite{mcdonald2002fractional,aroca2010puiseux, aroca2009family}, see also the references therein, and to polynomials over a field of positive characteristic in~\cite{saavedra2017mcdonald}. Supports of series were also studied in~\cite{aroca2022minimal,aroca2019support}, though in the more general context of series algebraic over a certain ring of series. For a complexity analysis of (variants of) the Newton-Puiseux algorithm we refer to~\cite{walsh2000polynomial}, in particular to~\cite{poteaux2015improving} and the references therein. We point out that there are other computational models for algebraic (power) series based on the implicit function theorem~\cite{alonso1992computational, alonso2018encoding}, diagonals of rational functions~\cite{denef1987algebraic}, or closed-form formulas of their coefficients~\cite{hickel2019algebraic}.

\section{Preliminaries}\label{sec:prelim}

We begin with introducing the objects this article is about: multivariate algebraic series and the Newton-Puiseux algorithm to constructively work with them.

We use multi-index notation. We denote by $\bold{x}=(x_1,\dots,x_n)$ a vector of variables, and we write $\bold{x}^I=x_1^{i_1} \cdots x_n^{i^n}$ for the monomial whose exponent vector is $I = (i_1,\dots,i_n)\in\mathbb{Q}^n$. We work over a (computable) algebraically closed field $\mathbb{K}$ of characteristic zero. A \emph{series} $\phi$ in $\bold{x}$ over~$\mathbb{K}$ is a formal sum 
\begin{equation*}
\phi = \sum_{I\in\mathbb{Q}^n} a_I \bold{x}^I
\end{equation*}
of terms in $\bold{x}$ whose coefficients $a_I$ are elements of $\mathbb{K}$.
Its \emph{support} is defined by 
\begin{equation*}
\mathrm{supp}(\phi) = \{I\in\mathbb{Q}^n: a_I \neq 0 \},
\end{equation*}
and we will assume throughout that there is a vector~$v\in\mathbb{R}^n$, a strictly convex rational cone $C\subseteq\mathbb{R}^n$ and an integer $k\in\mathbb{Z}$ such that 
\begin{equation*}
\mathrm{supp}(\phi) \subseteq \left( v + C \right) \cap \frac{1}{k}\mathbb{Z}^n.
\end{equation*}

We recall that a \emph{cone} $C$ is a subset of $\mathbb{R}^n$ that is closed under multiplication with non-negative numbers. It is called \emph{convex}, if for any two points of $C$, it also contains the line segment that joins them, and it is said to be \emph{strictly convex}, if it is convex and does not contain any lines. It is \emph{rational}, if there are $v_1,\dots,v_k \in\mathbb{Z}^n$ such that 
\begin{equation*}
C = \mathrm{cone}\{v_1,\dots,v_k\} := \mathbb{R}_{\geq 0}\cdot v_1 + \dots + \mathbb{R}_{\geq0}\cdot v_k.
\end{equation*} 
Although not always explicitly stated, all cones appearing in this text are strictly convex rational cones. 

Without any restriction on their supports, the sum and product of two series is not well-defined.
\begin{Example}\label{ex:well-definedness}
The geometric series
\begin{equation*}
\phi_1 = 1 + x + x ^2 + \dots \quad \text{and} \quad \phi_2 = - x^{-1} - x^{-2} - x^{-3} - \dots
\end{equation*}
are series in the above sense, but neither is their sum nor their product. Their product is not meaningful, since its coefficients involve infinite sums, and their sum is not well-defined, because its support is not contained in a shift of a strictly convex cone.

\end{Example}
For any strictly convex rational cone $C\subseteq\mathbb{R}^n$ the set~$\mathbb{K}_C[[x]]$ of series whose support is contained in $C$ is a ring with respect to addition and multiplication~\cite[Theorem 10]{monforte2013formal}. Yet, it is not a field~\cite[Theorem 12]{monforte2013formal}. Any~$w\in\mathbb{R}^n$ whose components are linearly independent over $\mathbb{Q}$ defines an additive total order $\preceq$ on~$\mathbb{Q}^n$ by
\begin{equation*}
\alpha \preceq \beta \quad :\Longleftrightarrow \quad \alpha \cdot w \leq  \beta \cdot w,
\end{equation*}
where $v \cdot w$ denotes the Euclidean inner product of $v,w\in\mathbb{R}^n$.
A cone $C\subseteq\mathbb{R}^n$ is \emph{compatible} with $w\in\mathbb{R}^n$ and the induced total order~$\preceq$ on $\mathbb{Q}^n$, if $C\cap \mathbb{Q}^n$ has a maximal element.
\begin{figure}
\begin{center}
  \begin{tikzpicture}[scale=.2]
    \draw[->] (-6.5,-3)--(6.5,-3);
    \draw[->](-3,-6.5)--(-3,6.5);
    \draw (-29/18,-5.5)--(-5.5,1.5); 
    \draw[->] (-3,-3)--(-24/5,-4) node[left] {$w$};
    \draw (-5.5,0) node {$\bullet$} node[below] {$\alpha$};
    \draw (-2,1.25) node {$\bullet$} node[below] {$\beta$} ;     
    \draw (-1.5,2.5) node {$\bullet$} node[right] {$\gamma$} ;
    \draw (-271/72,11/2)--(169/72,-11/2);
     \begin{scope}[xshift=20cm]
     \fill[lightgray] (-3,-3)--(-5.5,5.5)--(5.5,5.5)--(5.5,-5.5)--cycle;
    \fill[gray] (-3,-3)--(-1,5.5)--(5.5,5.5)--(5.5,-1)--cycle;
    \draw[->](-6.5,-3)--(6.5,-3);
    \draw[->](-3,-6.5)--(-3,6.5);
    \draw (-29/18,-5.5)--(-5.5,1.5); 
    \draw[->] (-3,-3)--(-24/5,-4) node[left] {$w$};
\end{scope}
  \end{tikzpicture}
\end{center}
 \caption{ A vector $w\in\mathbb{R}^2$ that induces an additive total order $\preceq$ on $\mathbb{Q}^2$ such that $\alpha > \beta > \gamma$ and two cones that are compatible with it.}
\end{figure}
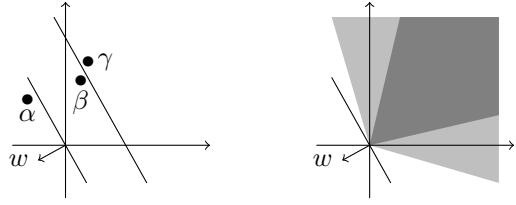
Given an additive total order~$\preceq$ on $\mathbb{Q}^n$, we denote by $\mathscr{C}$ the set of compatible strictly convex rational cones, and we write
\begin{equation*}
\mathbb{K}_{\preceq}((\bold{x})):= \bigcup_{C\in\mathscr{C}} \bigcup_{e\in\set Q^n} x^e \mathbb{K}_C[[\bold{x}]]
\end{equation*} 
for the set of series whose support is contained in a shift of such a cone. It is not only a ring but even a field~\cite[Theorem 15]{monforte2013formal}.

A series $\phi$ is said to be \emph{algebraic}, if there is a non-zero polynomial~$p\in\mathbb{K}[\bold{x},y]$ such that 
\begin{equation*}
p(\bold{x},\phi) = 0,
\end{equation*}
and it is said to be \emph{D-finite}~\cite{lipshitz1989d}, if for every $i\in\{1,\dots,n\}$ there are polynomials $q_0,\dots,q_r\in\mathbb{K}[\bold{x}]$ such that 
\begin{equation*}
q_0 \phi + q_1 \frac{\partial}{\partial x_i}\phi + \dots + q_r \frac{\partial^r}{\partial x_i^r}\phi = 0.
\end{equation*}
Every algebraic series is D-finite~\cite[Theorem 6.1]{kauers2011concrete}, and just as algebraic series, D-finite series satisfy many closure properties. For instance, the sum $\phi_1 + \phi_2$ of two D-finite series $\phi_1$ and $\phi_2$ is D-finite~\cite[Theorem 7.2]{kauers2011concrete}, and so is the restriction of a D-finite series $\phi$ to a rational cone $C$~\cite{bostan2017hypergeometric}, i.e. 
\begin{equation*}
[\phi]_C(\bold{x}) := \sum_{I\in C\cap \mathbb{Q}^n}\left( [\bold{x}^I]\phi\right) \bold{x}^I,
\end{equation*}
where $[\bold{x}^I]\phi$ denotes the coefficient of $\bold{x}^I$ in $\phi$. These closure properties are effective in the sense that systems of differential equations for $\phi_1 + \phi_2$ and $[\phi]_C$ can be computed from the differential equations satisfied by $\phi_1$, $\phi_2$ and $\phi$. 

Having a univariate D-finite series $\phi(t) := \sum_{k\geq k_0} \phi_k t^k$ in $t$ and a differential equation it satisfies, it can be checked effectively whether it is identically zero. The differential equation for $\phi$ translates into a recurrence relation for its coefficients,
\begin{equation*}
p_0(k) \phi_k + p_1(k)\phi_{k+1} + \dots + p_r(k)\phi_{k+r} = 0, \quad p_0,\dots,p_r \in\mathbb{K}[k].
\end{equation*}
It follows that $\phi = 0$ if and only if $\phi_k = 0$ for finitely many $k$, the number of terms which have to be compared to zero depending on the order of the recurrence and the largest integer root of $p_r$.

Given $p\in\mathbb{K}[\bold{x},y]$ and a total order $\preceq$ on $\mathbb{Q}^n$ induced by $w\in\mathbb{R}^n$, the Newton-Puiseux algorithm~\cite{MacDonald} determines the first terms of the series solutions of $p(\bold{x},y) = 0$ in~$\mathbb{K}_\preceq((\bold{x}))$.
We collect some definitions before its presentation. The \emph{Newton polytope} of~$p$ is the convex hull of the support of $p$,
 \begin{equation*}
\mathrm{Newt}(p) := \mathrm{conv}(\mathrm{supp}(p)).
\end{equation*}
If $e$ is an edge of $\mathrm{Newt}(p)$ that connects two vertices $v_1$ and $v_2$, we write $e = \{v_1,v_2\}$. It is called \emph{admissible}, if $v_{1,n+1}\neq v_{2,n+1}$. If $v_{1,n+1} < v_{2,n+1}$, we call $v_1$ and $v_2$ the minor and major vertex of~$e$, respectively, and denote them by~$\mathrm{m}(e)$ and $\mathrm{M}(e)$. An \emph{edge path} on $\mathrm{Newt}(p)$ is a sequence $e_1,\dots,e_k$ of edges such that $\mathrm{m}(e_{i+1}) = \mathrm{M}(e_i)$ for $i=1,\dots,k-1$. Let $P_e$ be the projection on $\mathbb{R}^{n+1}$ that projects on~$\mathbb{R}^n\times \{0\}$ along lines parallel to an (admissible) edge $e$. The \emph{barrier cone} of~$e$ is the smallest cone that contains $P_e(\mathrm{Newt}(p)) - P_e(e)$, the projection of the Newton polytope of $p$ shifted by the projection of its edge $e$. It is denoted by $C(e)$. We occasionally identify $\mathbb{R}^n$ and $\mathbb{R}^n\times \{0\}\subseteq \mathbb{R}^{n+1}$, and consider $C(e)$ as a subset of $\mathbb{R}^n$. Its \emph{dual} is
\begin{equation*}
C(e)^* := \{ w\in \mathbb{R}^n : v\cdot w \leq 0 \text{ for all } v \in C(e) \}.
\end{equation*}
A vector $w\in\mathbb{R}^n$ that defines a total order on $\mathbb{Q}^n$ is said to be \emph{compatible} with $e$, if $w\in C(e)^*$, and an edge path is called \emph{coherent}, if there is a $w\in\mathbb{R}^n$ that is compatible with all of its edges.

\begin{figure}
\begin{center}
  \begin{tikzpicture}[scale=.2]
    \fill[gray] (0,0)--(1,5.5)--(5.5,5.5)--(5.5,1)--cycle;
    \fill[lightgray] (0,0)--(-5.5,1)--(-5.5,-5.5)--(1,-5.5)--cycle;
    \draw[->](-6.5,0)--(6.5,0);
    \draw[->](0,-6.5)--(0,6.5);
 \begin{scope}[xshift=20cm]
   \fill[lightgray] (-5.5,-0.5)--(-5.5,5.5)--(5.5,5.5)--(5.5,-5.5)--(-0.5,-5.5)--cycle;
    \fill[gray] (0,-1)--(5.5,-4)--(5.5,5.5)--(0,5.5)--cycle;
    \draw[->](-6.5,-3)--(6.5,-3);
    \draw[->](-3,-6.5)--(-3,6.5);
    \foreach \x/\y in {-3/-2, -3/-1, -3/-4, -1/0, -4/-1, -4/-2, -2/-2, -2/-4.5, -2/-5.5, -1/-3, 0/-3, 0/-1} \draw (\x,\y) node {$\bullet$};
    \end{scope}
  \end{tikzpicture}
\end{center}
  \caption{Two strictly convex cones that are dual to each other, and the support of the first terms of a series and a shifted cone that contains the remaining (non-depicted) support.}
\end{figure}
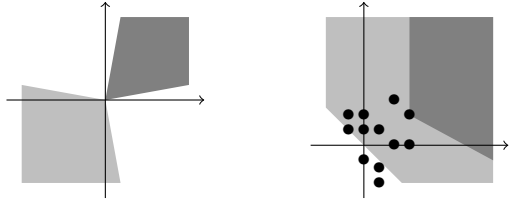
Given an admissible edge $e = \{v_1,v_2\}$, we denote its \emph{slope} with respect to its last coordinate by
\begin{equation*}
\mathrm{S}(e):= \frac{1}{v_{2,n+1}-v_{1,n+1}}(v_{2,1}-v_{1,1},\dots,v_{2,n}-v_{1,n}).
\end{equation*}
The edge polynomial $p_e(t)$ of an edge $e$ of the Newton polytope of $p$ is 
\begin{equation*}
p_e(t)=\sum_{I} a_I t^{I_{n+1}-\mathrm{m}(e)_{n+1}},
\end{equation*}
where $a_I = [(\bold{x},y)^I]p$ and the sum runs over all $I$ in~$e\cap \mathrm{supp}(p)$.

We now present the Newton-Puiseux algorithm. For details, in particular for a proof of its correctness, we refer to~\cite[Theorem~3.5]{MacDonald}.

\begin{Algorithm}[Newton-Puiseux Algorithm]\label{alg:NPA}
Input: A square-free and non-constant polynomial $p\in\mathbb{K}[\bold{x},y]$, an admissible edge $e$ of its Newton polytope, an element $w$ of the dual of its barrier cone $C(e)$ defining a total order on $\mathbb{Q}^n$, and a (non-negative) integer $k$.\\
  Output: A list of $\mathrm{M}(e)_{n+1}-\mathrm{m}(e)_{n+1}$ many pairs $(c_1\bold{x}^{\alpha_1}+\dots+c_N \bold{x}^{\alpha_N},C)$ with $c_1\bold{x}^{\alpha_1},\dots,c_N\bold{x}^{\alpha_N}$ being the first $N$ terms of a series solution $\phi$ of $p(\bold{x},\phi) = 0$, ordered with respect to $w$ in decreasing order, and $C$ being a strictly convex rational cone such that $\mathrm{supp}(\phi)\subseteq \{\alpha_1,\dots,\alpha_{N-1}\} \cup \left(\alpha_N + C\right)$, where $N$ is the smallest integer greater than or equal to $k$ such that the series solutions can be distinguished by their first $N$ terms.
  \step 10 Compute the roots $c$ of the edge polynomial $p_e(t)$ associated with $e$, let $L_1$ be the set of (pairwise different) pairs~$(\phi,e)$ with $\phi = c \bold{x}^{-\mathrm{S}(e)}$ and set $L_{\mathrm{finite}}=\emptyset$ and $N=1$.
  \step 20 While $|L_{\mathrm{finite}}| +  |L_1| < \mathrm{M}(e)_{n+1}-\mathrm{m}(e)_{n+1}$ or $\left(|L_{\mathrm{finite}}| <  \mathrm{M}(e)_{n+1}-\mathrm{m}(e)_{n+1} \text{ and } N< k\right)$, do:
  \step 31 Set $L_2 = \emptyset$ and $N = N+1$.
  \step 41 While there is a $(\phi,e)\in L_1$ with $\phi$ not having $k$ terms or $p_e(t)$ not having only simple roots, do for each $(\phi,e)\in L_1$:
  \step 52 If $\phi$ satisfies $p(\bold{x},\phi)=0$, append $(\phi,e)$ to $L_{\mathrm{finite}}$, otherwise compute the Newton polytope of $p(\bold{x},\phi+y)$ and determine its unique edge path $e_1,\dots,e_l$ such that $\mathrm{m}(e_1)_{n+1}$ equals zero, and $\mathrm{M}(e_l)$, but not $\mathrm{m}(e_l)$, lies on the line through $e$, and $w\in \bigcap C^*(e_i)$.
  \step 62 For each edge $e$ of the edge path, do:
  \step 73 Compute the roots $c$ of the edge polynomial $p_e(t)$ associated with the edge $e$ of the Newton polytope of $p(\bold{x},\phi+y)$, and append to $L_2$ all pairs $(\phi + c\bold{x}^{-\mathrm{S}(e)},e)$.
  \step 81 Set $L_1 = L_2$.
  \step 90 Replace each pair $(\phi + c\bold{x}^{-\mathrm{S}(e)},e)$ in $L_{\mathrm{finite}}\cup L_1$ by $(\phi + c\bold{x}^{-S(e)}, C)$, where $C$ is the barrier cone of $e$ with respect to $p(\bold{x},\phi+y)$, and return $L_{\mathrm{finite}}\cup L_1$. 
\end{Algorithm}

The input of the Newton-Puiseux algorithm as stated here are a square-free polynomial $p$, an edge $e$ of its Newton polytope, some vector $w$ inducing a total order and an integer $k$, and outputs the first terms of series roots of $p$, sufficiently many to distinguish them from each other on the basis of this output. The order in which the terms of these series are computed is given by the total order induced by $w$. The integer $k$ is a lower bound on the number of terms computed, the actual number is $N$ and can be greater if $k$ is too small to distinguish the series solutions by their first $k$ terms. It is the minimal number not smaller than $k$ for which this is the case. The algorithm proceeds similarly as the classical Newton-Puiseux algorithm: it starts with computing a term $c_1\bold{x}^{\alpha_1}$ for which the highest order terms with respect to $w$ in the expansion of $p(\bold{x}, c_1\bold{x}^\alpha_1 + y)$ cancel. Choosing $\alpha_1 = -\mathrm{S}(e_1)$ for $e_1 = e$, these terms come from those terms of $p$ which are supported on $e_1$. The choice of $c_1$ guarantees that they sum up to zero. See step~1. Having computed the first $N$ terms $c_1\bold{x}^{\alpha_1}, \dots, c_N \bold{x}^{\alpha_N}$, the next term is computed analogously, starting with some edge $e_{N+1}$ of the Newton polytope of $p(c_1\bold{x}^{\alpha_1} + \dots + c_N \bold{x}^{\alpha_N}+y)$, chosen such that the order of the corresponding term is smaller than the order of the previously computed terms. See step~5 and step~7. If an edge polynomial $p_e(t)$ has several pairwise different roots, or if there is more than one possible choice for the edge used in the construction of the next term, an approximate series solution splits into several different approximate solutions. The while-loop which starts in step~2 stops when the number of approximate solutions constructed reaches $|\mathrm{M}(e)_{n+1} - \mathrm{m}(e)_{n+1}|$ and the first $N$ terms of each solution have been computed. It eventually terminates because $p$ is square-free and $\mathbb{K}$ has characteristic zero, see~\cite[Theorem~3.6]{MacDonald}. Clearly, as soon as an edge polynomial $p_{e_N}(t)$ has only simple roots the corresponding solutions constructed can be distinguished by their first $N$ terms. In this case, the support of the corresponding series are contained in a shift of the barrier cone $C(e_N)$, see~\cite[Section~3.10]{MacDonald}.

The following statement is an immediate consequence of the correctness of Algorithm~\ref{alg:NPA} and will be helpful later. See~\cite[Corollary~4.1]{MacDonald} for a similar statement.
\begin{Proposition}\label{prop:algClosure}
Let $p\in\mathbb{K}[\bold{x},y]$ and let $w\in\mathbb{R}^n$ define a total order $\preceq$ on $\mathbb{Q}^n$. Then $\mathbb{K}_{\preceq}((\bold{x}))$ contains $\deg_y (p)$ many series roots of $p$ all of which can be computed by Algorithm~\ref{alg:NPA}.
\end{Proposition}
\begin{proof}
We assume that $p$ is square-free and not a multiple of $y$. If $p$ were not square-free, we could instead consider its square-free part $p/\mathrm{gcd}(p,\frac{\partial}{\partial y}p)$. This does not affect the set of roots but only their multiplicities. We show that there is a unique edge path~$e_1,\dots,e_k$ on the Newton polytope of $p$ such that $w\in \bigcap C^*(e_i)$ and~$\mathrm{m}(e_1)_{n+1} = 0$ and $\mathrm{M}(e_k)_{n+1} = \deg_y (p)$. Algorithm~\ref{alg:NPA} then implies that these edges give rise to $\deg_y (p)$ many pairwise different series roots of $p$ all of which are elements of $\mathbb{K}_{\preceq}((\bold{x}))$: the number of (pairwise different) series roots in $\mathbb{K}_\preceq((\bold{x}))$ coming from an edge $e_i$ is $\mathrm{M}(e_i)_{n+1} - \mathrm{m}(e_i)_{n+1}$, series roots coming from different edges $e_i$ and $e_j$ are different since they can be distinguished by their leading exponents $-\mathrm{S}(e_i)$ and $-\mathrm{S}(e_j)$, and $\mathrm{M}(e_i)_{n+1} - \mathrm{m}(e_i)_{n+1}$ telescopes to $\deg_y(p)$ when summed over all edges of the edge path. To construct such an edge path, let $v_1$ be the vertex of the Newton polytope of $p$ which maximizes  $v\cdot w$ among all vertices $v$ in~$\mathbb{R}^n\times \{0\}$ and let $e_1 = \{v_1,v_2\}$ be the admissible edge whose major vertex $v_2$ maximizes $ v \cdot w$ among all vertices $v$ that are adjacent to $v_1$ by an admissible edge. Then~$w\in C^*(e_1)$. The latter holds because the hyperplane which contains $e_1$ and which is spanned by $v_2-v_1$ and the orthogonal complement of $w$ in $\mathbb{R}^n\times \{0\}$ is a supporting hyperplane for the Newton polytope of $p$. If $v_{2,n+1} = \deg_y (p)$ we are finished as $e_1$ is already the edge path we are looking for. If this is not the case we can extend $e_1$ by an admissible edge $e_2$ whose minor vertex equals the major vertex of $e_1$ and which maximizes $\mathrm{M}(e_2) \cdot w$ among all such edges. Extending the edge path until it cannot be extended any further by an admissible edge results in an edge path with the claimed property. The uniqueness of the path is a consequence of $w$ defining a total order on $\mathbb{R}^n$.
\end{proof}

\section{Finite encodings}\label{sec:}
The Newton-Puiseux algorithm determines the series solutions of a polynomial equation term by term.
The next proposition implies that it can also be used to represent a series by a finite amount of data: its minimal polynomial (or an annihilating square-free polynomial), a total order, and its first few terms with respect to this order.

\begin{Proposition}\label{prop:uniqueness}
Let $p\in\mathbb{K}[\bold{x},y]$ be a square-free and non-constant polynomial, $e$ an admissible edge of its Newton polytope, $w$ an element of $C^*(e)$ inducing a total order $\preceq$ on $\mathbb{Q}^n$ and $a_1\bold{x}^{\alpha_1},\dots,a_N \bold{x}^{\alpha_N}$ the first few terms of a series solution $\phi$ as output by Algorithm~\ref{alg:NPA} when applied to $p$, $e$, $w$ and~$k=0$. Then $\phi$ is the only series solution in $\mathbb{K}_{\preceq}((\bold{x}))$ whose first terms are $a_1\bold{x}^{\alpha_1},\dots,a_N \bold{x}^{\alpha_N}$.
\end{Proposition}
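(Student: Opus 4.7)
The plan is to reduce the proposition to (i) the correctness of the multivariate Newton-Puiseux algorithm proved in~\cite[Theorem 3.5]{MacDonald} and (ii) the explicit termination condition built into Algorithm~\ref{alg:NPA}. Let $\psi\in\K_\preceq((\bold{x}))$ be any series solution of $p(\bold{x},y) = 0$ whose first $N$ terms with respect to $\preceq$ are $a_1\bold{x}^{\alpha_1},\dots,a_N\bold{x}^{\alpha_N}$; the goal is to conclude $\psi = \phi$.

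First I would show that $\psi$ is among the series solutions enumerated by Algorithm~\ref{alg:NPA} applied to $(p,e,w,0)$. Since the leading term of $\psi$ agrees with that of $\phi$, we have $\mathrm{lexp}_w(\psi) = \alpha_1 = -\mathrm{S}(e)$, and by the defining property of the edge polynomial $p_e$ the leading coefficient $a_1$ must be a root of $p_e$; hence $\psi$ starts like one of the initial candidates produced in step~1. For the inductive step, I would pass to $p(\bold{x},a_1\bold{x}^{\alpha_1}+\dots+a_j\bold{x}^{\alpha_j}+y)$, whose root $\psi - (a_1\bold{x}^{\alpha_1}+\dots+a_j\bold{x}^{\alpha_j})$ has $\preceq$-leading exponent strictly below $\alpha_j$. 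The requirement $w\in\bigcap C^*(e_i)$ imposed in step~5 is exactly what forces this next exponent to lie on the edge path of the new Newton polytope visible from direction $w$, so that $a_{j+1}\bold{x}^{\alpha_{j+1}}$ appears as one of the candidates generated at step~7. By induction, $\psi$ coincides with one of the partial solutions maintained by the algorithm through its first $N$ iterations.

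Next I would invoke the exit condition of the main loop. The algorithm terminates once $|L| = \mathrm{M}(e)_{n+1} - \mathrm{m}(e)_{n+1}$, and step~4 stops extending a pair $(\phi_j,e_j)$ only when $\phi_j$ either satisfies $p(\bold{x},\phi_j) = 0$ identically or has $p_{e_j}$ with only simple roots. In the first case $\phi_j$ is the whole series; in the second, any two series extensions of $\phi_j$ coming from distinct simple roots of $p_{e_j}$ must differ in their next term. It follows that the $\mathrm{M}(e)_{n+1} - \mathrm{m}(e)_{n+1}$ partial solutions in the final list $L$ are pairwise distinct in their first $N$ terms, so $\phi$ is the unique one starting with $a_1\bold{x}^{\alpha_1},\dots,a_N\bold{x}^{\alpha_N}$. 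Together with the preceding step this forces $\psi = \phi$.

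The main obstacle is the inductive first step: verifying that every series solution of $p(\bold{x},y) = 0$ in $\K_\preceq((\bold{x}))$ is actually tracked by the algorithm and does not escape the edge-path construction at some iteration. This is precisely the correctness of the multivariate Newton-Puiseux algorithm, which I would cite from~\cite[Theorem 3.5]{MacDonald} rather than re-derive; once granted, the uniqueness statement follows immediately from the termination analysis above.
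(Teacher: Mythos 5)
Your proof is correct and takes essentially the same approach as the paper: both reduce the claim to the correctness of the multivariate Newton--Puiseux algorithm from~\cite{MacDonald} (which guarantees that every series solution in $\mathbb{K}_\preceq((\bold{x}))$ is tracked starting from some edge $\tilde e$ with $w\in C^*(\tilde e)$) together with the termination condition of Algorithm~\ref{alg:NPA}, which ensures that the partial sums in the final list are pairwise distinguished by their first $N$ terms. Your write-up is slightly more explicit about the inductive tracking and the exit condition, whereas the paper's one-paragraph proof separates out the case of another edge $\tilde e$ by directly observing that $-\mathrm{S}(\tilde e)\neq-\mathrm{S}(e)$, but the underlying decomposition is the same.
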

\begin{proof}
The proof of Proposition~\ref{prop:algClosure} shows that $e$ can be extended to an edge path on the Newton polytope of $p$ that gives rise to $\deg_y(p)$ many series solutions in $\mathbb{K}_{\preceq}((\bold{x}))$.
By design of the algorithm, the first $N$ terms of any series solution constructed from $e$ different from $\phi$ differ from $a_1\bold{x}^{\alpha_1},\dots,a_N\bold{x}^{\alpha_N}$. If $e'$ is any other edge of the edge path, then the leading exponent of any series solution resulting from it is $-\mathrm{S}(e')$ and different from the leading exponent $-\mathrm{S}(e)$ of $\phi$, see the proof of~\cite[Lemma 3.7]{MacDonald}.
\end{proof}

It is natural to ask whether there is an upper bound on the number of terms needed to encode an algebraic series. The question is discussed in~\cite{walsh2000polynomial} for series in a single variable over $\mathbb{Q}$. For multivariate series, however, it is still an open problem.
\begin{Problem}
Determine an upper bound on the number of terms needed to encode an algebraic series.
\end{Problem}

We illustrate the Newton-Puiseux algorithm and Proposition~\ref{prop:uniqueness}.

\begin{Example}\label{ex:NPA}
We determine the first terms of a series solution of the equation
\begin{equation*}
p(x,y,z) := 4x^2y+(x^2y+xy^2+xy+y)^2-z^2 = 0
\end{equation*} 
when solved for $z$.
The Newton polytope of $p$ has four admissible edges, one of which is the edge~$e=\{(0,2,0),(0,0,2)\}$. Its barrier cone is $C(e)=\mathrm{cone}\{(1,1),(2,-1)\}$, and $w:=(-\sqrt{2},-1)$ is an element of its dual $C^*(e)$. Its components are linearly independent over $\mathbb{Q}$, therefore it defines a total order $\preceq$ on $\mathbb{Q}^2$. By~\cite[Theorem 3.5]{MacDonald}, and because the projection of $e$ on its last coordinate has length $2$, there are two series solutions~$\phi_1$ and $\phi_2$ of~$p(x,y,z)=0$ in $\mathbb{K}_\preceq((x,y))$. We determine their first terms using Algorithm~\ref{alg:NPA}. The slope of $e$ is~$\mathrm{S}(e) = (0,-1)$, so the solutions have a term of the form $c y$, for some $c\in\mathbb{K}$. The coefficients $c$ are the solutions to $-1+t^2 = 0$. Hence, $y$ is the first term of one series solution, say $\phi_1$, and $-y$ the first term of~$\phi_2$. Furthermore, their support is contained in $(0,1) + \mathrm{cone}\{(1,1),(2,-1)\}$, because $-1+t^2$ has only simply roots. To compute the next term of $\phi_1$, for instance, we consider the polynomial $p(x,y,y+z)$. The edge path on its Newton polytope mentioned in Algorithm~\ref{alg:NPA} consists of the single edge $e=\{(1,2,0),(0,1,1)\}$. Its slope with respect to the last coordinate is~$(-1,-1)$, so its next term is of the form $c xy$, where $c$ is the root of $-2+2t$. By Proposition~\ref{ex:NPA} the series $\phi_1$ and $\phi_2$ can be encoded by $(p,(-\sqrt{2},-1),y)$ and $(p,(-\sqrt{2},-1),-y)$, respectively.
\end{Example}

\section{An effective equality test}\label{sec:equ}

The encoding of an algebraic series by its minimal polynomial, a total order, and its first terms is not unique, and so it is natural to ask if it is possible to decide whether two encodings represent the same series. We clarify this in this section. We begin with explaining how to compare the initial terms of two series when they are given with respect to different total orders. Let~$\phi_1$ and $\phi_2$ be two series solutions of $p(\bold{x},y)=0$ encoded by~$(p,w_1,q_1)$ and $(p,w_2,q_2)$ where $w_1$ and~$w_2$ are elements of $\mathbb{R}^n$ inducing total orders~$\preceq_1$ and~$\preceq_2$ on $\mathbb{Q}^n$ and $q_1=a_1 \bold{x}^{\alpha_1} + \dots + a_N \bold{x}^{\alpha_N}$ and $q_2=b_1 \bold{x}^{\beta_1} + \dots + b_M \bold{x}^{\beta_M}$ are Puiseux polynomials in $\bold{x}$ representing the sum of the first terms of $\phi_1$ and $\phi_2$ with respect to $\preceq_1$ and $\preceq_2$, respectively. In general, we cannot decide whether $\phi_1$ equals $\phi_2$ or not based on $q_1$ and $q_2$ alone. The reason is, there might be a term of $q_2$ which does not appear in $q_1$, because it has not been computed yet. To make sure that this is not the case, we assume that the order of the lowest order term of $q_1$ is less than or equal to the order of the lowest order term of $q_2$ with respect to $w_1$. Let $\sigma = (\sigma_1,\dots,\sigma_N)\in \mathfrak{S}_N$ be a permutation such that $(a_{\sigma_1}\bold{x}^{\alpha_{\sigma_1}},\dots, a_{\sigma_N}\bold{x}^{\alpha_{\sigma_N}})$ is the sequence of terms of $q_1$ when ordered with respect to $w_2$. If $(b_1\bold{x}^{\beta_1},\dots, b_M \bold{x}^{\beta_M})$ does not equal $(a_{\sigma_1}\bold{x}^{\alpha_{\sigma_1}},\dots, a_{\sigma_M}\bold{x}^{\alpha_{\sigma_M}})$, then the series $\phi_1$ and $\phi_2$ cannot be the same. The next example demonstrates that we can also prove the equality of two series by comparing (only finitely many of) their initial terms and estimating their supports.

\begin{Example}\label{ex:nonUniqueness}
The Newton polytope of 
\begin{equation*}
p(x,y,z):= x+y - (1+x+y) z
\end{equation*}
has four admissible edges from each of which we can compute the first terms of a series solution of~$p(x,y,z) = 0$. These series can be encoded by 
\begin{align*}
(p, (-1+1/\sqrt{2},1),1) \quad  &\text{and} \quad (p, (-1+1/\sqrt{2},-1),1), \quad \text{and}\\ 
(p, (-1+1/\sqrt{2},-2),x) \quad &\text{and} \quad (p, (-2+1/\sqrt{2},-1),y).
\end{align*}

We claim that the series~$\phi_1$ represented by $(p, (-1+1/\sqrt{2},-2),x)$ and~$\phi_2$ represented by $(p, (-2+1/\sqrt{2},-1),y)$ are equal. The order of $y$ with respect to~$(-1+1/\sqrt{2},-2)$ is $-2$, and the terms of $\phi_1$ whose order with respect to~$(-1+1/\sqrt{2},-2)$ is at least~$-2$ are
\begin{equation*}
x,-x^2,x^3,-x^4,x^5,-x^6,y.
\end{equation*}
Ordering these terms with respect to $(-2+1/\sqrt{2},-1)$ results in the sequence
\begin{equation*}
y,x,-x^2,x^3,-x^4,x^5,-x^6,
\end{equation*}
whose first term equals the first term of $\phi_2$.
\begin{figure}
\begin{center}
  \begin{tikzpicture}[scale=.2]
    \fill[gray] (-3,-1)--(-3,5.5)--(11,5.5)--(11,-5.5)--(11,-5.5)--(3/2,-11/2)--cycle;
    \draw[->](-6.5,-3)--(12,-3);
    \draw[->](-3,-6.5)--(-3,6.5);
    \draw (-11/2,2.232)--(-3,-1)--(0.481,-11/2);
    \draw (-3,-1) node {$\bullet$};
    \begin{scope}[xshift=25cm]
  \fill[gray] (-3,-1)--(-3,5.5)--(11,5.5)--(11,-3)--cycle;
    \draw[->](-6.5,-3)--(12,-3);
    \draw[->](-3,-6.5)--(-3,6.5);
    \draw (-11/2,-0.6339)--(-3,-1)--(11/2,-2.2448)--(11,-3.05);
    \foreach \x/\y in {-1/-3, 1/-3, 3/-3, 5/-3, 7/-3, 9/-3,
     -3/-1
} \draw (\x,\y) node {$\bullet$};
\end{scope}
    \end{tikzpicture}
\end{center}
    \caption{The leading exponent of the series encoded by $(p, (-2+1/\sqrt{2},-1),y)$, the support of the first terms of $(p, (-1+1/\sqrt{2},-2),x)$ up to the order of $y$ with respect to $(-1+1/\sqrt{2},-2)$, and the (shifted) cones which contain their remaining (non-depicted) supports.} 
    \label{fig:equality}
\end{figure}
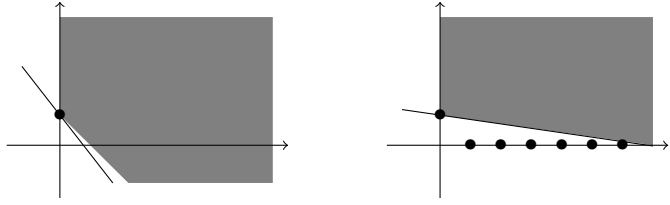
Algorithm~\ref{alg:NPA} shows that the support of $\phi_1$ is contained in a shift of $\mathrm{cone}\{(0,1),(7,-1)\}$ and that the support of $\phi_2$ is contained in a shift of $\mathrm{cone}\{(0,1),(1,-1)\}$. These cones are compatible in the sense that their sum is a strictly convex cone. Consequently, there is a total order $\preceq$ on $\mathbb{Q}^2$ that is compatible with both of them, and so~$\mathbb{K}_{\preceq}((x,y))$ contains both $\phi_1$ and $\phi_2$. Since, by construction, $\phi_1$ and $\phi_2$ are roots of $p$, and since $p$ has degree~$1$ with respect to $z$ and $\mathbb{K}_{\preceq}((x,y))$ is a field and therefore can contain at most one solution of $p(x,y,z) = 0$, the series $\phi_1$ and $\phi_2$ have to be the same. The series represented by the other encodings turn out to be different from $\phi_1$ and different from each other. See Figure~\ref{fig:equality}.
\end{Example}

The reasoning in Example~\ref{ex:nonUniqueness} relied on the cones output by Algorithm~\ref{alg:NPA} not being too big. Although they are not always minimal, we believe that they are always in certain situations. We will have more to say about this at the end of this section and in Section~\ref{sub:support}. For the moment we just state the following conjecture.
\begin{Definition}
A polynomial $p\in\mathbb{K}[x][y]$ is said to be content-free if the greatest common divisor of its coefficients in $\mathbb{K}[x]$ is $1$.
\end{Definition}
\begin{Conjecture}\label{conj:minimality}
Let $a_1\bold{x}^{\alpha_1}+\dots+a_N \bold{x}^{\alpha_N}$ be the sum of the first terms of a series solution $\phi$ of $p(\bold{x},\phi) = 0$ output by Algorithm~\ref{alg:NPA}, and let $C$ be the cone that goes with it. If
$p\in\mathbb{K}[\bold{x}][y]$ is content-free and if $k$, and hence $N$, is sufficiently large, then $C$ is minimal in the sense that there is no smaller cone satisfying
\begin{equation*}
\mathrm{supp}(\phi) \subseteq \{\alpha_1,\dots,\alpha_{N-1}\} \cup (\alpha_N + C).
\end{equation*}
\end{Conjecture}

We now prove the main theorem of this section.

\begin{Theorem}\label{thm:equality}
The equality of two algebraic series can be decided effectively.
\end{Theorem}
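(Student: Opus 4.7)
The plan is to promote the informal argument of Section~\ref{sec:equ} into a terminating decision procedure. Suppose we are given two encodings $(p_1,w_1,\tilde p_1)$ and $(p_2,w_2,\tilde p_2)$ of algebraic series $\phi_1,\phi_2$. I would first replace $p_1,p_2$ by a single square-free polynomial $p\in\mathbb{K}[\bold{x},y]$ vanishing on both series---for instance the square-free part of $p_1p_2$, or the $y$-gcd computed over $\mathbb{K}(\bold{x})[y]$---so that $\phi_1,\phi_2$ appear as two (possibly equal) roots of the common equation $p(\bold{x},y)=0$ and Algorithm~\ref{alg:NPA} can be applied uniformly.

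Next, I would invoke Algorithm~\ref{alg:NPA} to extend $\tilde p_2$ until its trailing exponent with respect to $\preceq_2$ is $\preceq_2$-smaller than the trailing exponent of $\tilde p_1$, and symmetrically for $\tilde p_1$. Reordering both truncations with respect to $\preceq_1$ and comparing term by term, any disagreement certifies $\phi_1\neq\phi_2$. If the truncations agree, I would inspect the support cone $C_2$ that Algorithm~\ref{alg:NPA} returns together with $\phi_2$: whenever $w_1\in C_2^{\ast}$ the series $\phi_2$ lies in $\mathbb{K}_{\preceq_1}((\bold{x}))$, which contains at most $\deg_y p$ roots of $p(\bold{x},y)=0$, and by Proposition~\ref{prop:uniqueness} the shared initial segment determines $\phi_1$ uniquely among them, so $\phi_1=\phi_2$.

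The main obstacle is the remaining case, in which the truncations agree but $w_1\notin C_2^{\ast}$. The argument preceding Conjecture~\ref{conj:minimality} shows that if $C_2$ were minimal then we could conclude $\phi_1\neq\phi_2$, because minimality together with $w_1\notin C_2^{\ast}$ forces an element of $\mathrm{supp}(\phi_2)$ above the trailing exponent $\beta_M$ that is $\preceq_1$-larger than every term of $\tilde p_1$. Granting Conjecture~\ref{conj:minimality}, enlarging the parameter $k$ in Algorithm~\ref{alg:NPA} eventually produces such a minimal $C_2$ and the procedure terminates. To avoid relying on the conjecture, I would instead iterate: increment $k$, recompute the truncations and $C_2$, and re-apply the three tests; the procedure halts either when a disagreement appears (proving $\phi_1\neq\phi_2$) or when $C_2$ shrinks enough that $w_1\in C_2^{\ast}$ (proving $\phi_1=\phi_2$).

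The technically delicate step, and the one I expect to carry the weight of the proof, is to bound $k$ effectively in terms of $p$, $w_1$, and $w_2$, so that termination is guaranteed in both directions. For the inequality direction, one must quantify how deep into the Newton--Puiseux expansions a discrepancy between two distinct roots of $p$ is guaranteed to surface; for the equality direction, one must quantify how large $k$ must be before the cones produced by Algorithm~\ref{alg:NPA} become small enough to satisfy the inclusion $w_1\in C_2^{\ast}$. Both are essentially quantitative refinements of Conjecture~\ref{conj:minimality} and constitute, in my view, the genuine combinatorial content of the theorem.
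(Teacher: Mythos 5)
Your proposal reproduces the preliminary reduction of Section~\ref{sec:equ} correctly (common square-free annihilator, extending truncations, comparing initial segments under $\preceq_1$, concluding equality when $w_1\in C_2^\ast$), but it stops exactly where the paper's proof begins. The case $w_1\notin C_2^\ast$ with $C_2$ possibly non-minimal is the entire content of the theorem, and you handle it by ``increment $k$ and hope.'' There is no argument that this loop terminates: if $\phi_1=\phi_2$ you need the cone $C_2$ output by Algorithm~\ref{alg:NPA} to eventually shrink enough that $w_1\in C_2^\ast$, which is precisely Conjecture~\ref{conj:minimality}, and the paper explicitly flags that conjecture as open and gives a proof \emph{independent} of it. You acknowledge the gap yourself in the final paragraph, so what you have written is an honest sketch of an approach, not a proof.

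The paper's actual route is different and avoids the minimality question entirely. It observes that the only thing one needs to decide is whether $\mathrm{supp}(\phi_2)\setminus\{\beta_1,\dots,\beta_{M-1}\}\subseteq\beta_M+C_1$, i.e.\ whether the support of an algebraic series is contained in a given rational cone. This is made decidable by a D-finiteness argument: substitute $x_i\mapsto x_it^{\omega_i}$ to get a series $\tilde\phi(\bold{x},t)$ that is graded by $t$, note that $\tilde\phi$ and its cone-restriction $[\tilde\phi]_C$ are D-finite (algebraic implies D-finite, and D-finite series are closed under restriction to a rational convex cone, cf.\ the cited result of Bostan et al.), so $\tilde\phi-[\tilde\phi]_C$ is D-finite in $t$ and its vanishing can be certified by checking finitely many coefficients via the associated recurrence. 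This is the missing ingredient your proposal needs; without it (or a proof of Conjecture~\ref{conj:minimality} with explicit bounds on $k$), you do not have a terminating decision procedure.
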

\begin{proof}
Let~$\phi_1$ and $\phi_2$ be two series solutions of $p(\bold{x},y)=0$ encoded by~$(p,w_1,q_1)$ and $(p,w_2,q_2)$ as above. Assume that the order of the lowest order term of $q_1$ is less than or equal to the order of the lowest order term of $q_2$ with respect to $w_1$, and assume that the terms of $q_2$ equal the first few terms of $q_1$ when ordered with respect to $w_2$. If this were not the case, then $\phi_1 \neq \phi_2$ and there is nothing left to show. Furthermore, assume that the lowest order term $b_M \bold{x}^\beta_M$ of $q_2$ with respect to $w_2$ equals the lowest order term $a_N \bold{x}^\alpha_N$ of $q_1$ with respect to $w_1$. Otherwise extend $q_2$ by additional terms of $\phi_2$ until it involves a term which does not appear in $q_1$ and extend $q_1$ by additional terms of $\phi_1$ until again all terms of $q_2$ appear in $q_1$. Let $C_1$ be the cone output by Algorithm~\ref{alg:NPA} such that $\mathrm{supp}(\phi_1)\subseteq\{\alpha_1,\dots,\alpha_{N-1}\}\cup \left( \alpha_N + C_1 \right)$, and let $C_2$ be the corresponding cone such that $\mathrm{supp}(\phi_2)\subseteq\{\beta_1,\dots,\beta_{M-1}\}\cup \left( \beta_M + C_2 \right)$. There are now two situations: either $C_1$ is a subset of $C_2$, or it is not. If $C_1\subseteq C_2$, we have $w_2\in C_1^*$, hence $\phi_1\in\mathbb{K}_{\preceq_2}((\bold{x}))$, and it follows that $\phi_1=\phi_2$ as in Example~\ref{ex:nonUniqueness}: the terms of $q_2$ being equal to the first terms of $q_1$ when ordered with respect to $w_2$ implies that $\phi_1$ needs to be different from the series roots of $p$ in $\mathbb{K}_{\preceq_2}((\bold{x}))$ different from $\phi_2$. However, it cannot be different from $\phi_2$ as $\mathbb{K}_{\preceq_2}((\bold{x}))$ cannot contain more than $\deg_y(p)$ many roots of $p$. If we assume that $C_1$ is the minimal cone for which~$\mathrm{supp}(\phi_1)\subseteq\{\alpha_1,\dots,\alpha_{N-1}\}\cup \left( \alpha_N + C_1 \right)$, then we can also conclude that~$\phi_1\neq \phi_2$, if $C_1\nsubseteq C_2$.

Since we do not know how to determine an $N$ for which the cone output by Algorithm~\ref{alg:NPA} is minimal (if such an $N$ exists at all), we have to argue differently. We explain how to prove (or disprove) that $\mathrm{supp}(\phi_1) \subseteq \{\alpha_1,\dots, \alpha_{N-1}\} \cup \left( \alpha_N + C_2 \right)$ using basic properties of D-finite series. To simplify the argument we explain how to decide whether the support of an algebraic series $\phi$ is contained in a given rational cone $C$. Wlog assume that $\mathrm{supp}(\phi)\subseteq \mathbb{Z}^n$. 
Let $\omega=(\omega_1,\dots,\omega_n)\in\mathbb{Z}^n$ be such that for each $i\in\mathbb{Z}$ there are only finitely many $\alpha\in\mathrm{supp}(\phi)$ for which $\alpha\cdot \omega = i$ and none if $i< i_0$ for some $i_0\in\mathbb{Z}$, and consider the series 
\begin{equation*}
\tilde{\phi} (\bold{x},t) := \phi (x_1 t^{\omega_1},\dots, x_n t^{\omega_n} ) 
\end{equation*}
Since $\phi$ is algebraic, so is~$\tilde{\phi}$, and because every algebraic series is D-finite, $\tilde{\phi}$ is D-finite too. 
Let
\begin{equation*}
[\phi]_C(\bold{x}) := \sum_{I\in C\cap \mathbb{Z}^n} \left([\bold{x}^I] \phi\right) \bold{x}^I
\end{equation*}
be the restriction of $\phi$ to $C$, and let $[\tilde{\phi}]_C(\bold{x},t) := [\phi]_C(x_1t^{\omega_1},\dots,x_n t^{\omega_n})$ denote the restriction of~$\tilde{\phi}$ to $C\times \mathbb{R}_{\geq 0}$.
By closure properties of D-finite functions, the series~$\tilde{\phi} - [\tilde{\phi}]_C$ is D-finite. In particular, seen as a series in $t$, the coefficients of $\tilde{\phi} - [\tilde{\phi}]_C$ satisfy a linear recurrence relation of the form 
\begin{equation}\label{eq:recurrence}
q_0(k)c_k + q_1(k)c_{k+1} + \dots + q_r(k)c_{k+r} = 0
\end{equation}
with $q_0,\dots,q_r \in \mathbb{K}[\bold{x}][k]$. Whether $\mathrm{supp}(\phi) \subseteq C$ can therefore be verified by checking if $\tilde{\phi} - [\tilde{\phi}]_C = 0$, which can be done by comparing finitely many of its initial terms to zero.
\end{proof}

The equality test for algebraic series is effective because the closure properties of D-finite functions it is based on can be performed effectively. However, as was explained in~\cite{bostan2017hypergeometric}, it can be computationally quite expensive to do so. The explicit computation of the recurrence relation~\eqref{eq:recurrence} could be avoided by deriving an upper bound for its order and the largest integer root of its leading coefficient polynomial and checking the equation which is to be verified for sufficiently many initial terms. Yet, these bounds may be so high that this is not practicable either. It is therefore preferable to have an equality test based on the correctness of Conjecture~\ref{conj:minimality}.

We end this section with an example that illustrates Conjecture~\ref{conj:minimality}.
\begin{Example}
Let
\begin{equation*}
p(x,y,z) := a_{000} + a_{010}y + z\left( a_{001} + a_{011}y +  a_{101}x + a_{111}xy\right)
\end{equation*}
be a polynomial in $x,y$ and $z$, and assume that its coefficients $a_{ijk}$ are undetermined and non-zero. We apply Algorithm~\ref{alg:NPA} to $p$, the edge $e = \{(0,0,0),(0,0,1)\}$, and any $w\in C^*(e)$ that induces a total order on $\mathbb{Q}^2$ and construct the first terms of a series solution $\phi$ of $p(x,y,\phi) = 0$. The slope of $e$ with respect to the last coordinate is $-\mathrm{S}(e) = (0,0)$, its edge polynomial is $p_e(t) = a_{000} + t a_{001}$, and so the first term of $\phi$ is 
\begin{equation*}
\phi_1 = -\frac{a_{000}}{a_{001}}.
\end{equation*}
To construct the next term of $\phi$ we consider 
\begin{align*}
p(x,y,z+\phi_1) = &y\left( a_{010} - \frac{a_{000}a_{011}}{a_{001}} \right) - x \left( \frac{a_{000}a_{101}}{a_{001}} + y\frac{a_{000}a_{111}}{a_{001}} \right)\\ 
&+ z\left( a_{001} + a_{011}y +  a_{101}x + a_{111}xy\right).
\end{align*}
Before we do so, we note that the algorithm implies that 
\begin{equation*}
\mathrm{supp}(\phi) \subseteq \mathrm{cone}\{(1,0),(0,1)\}.
\end{equation*}
The condition for no term other than $\phi_1$ having its exponent on $\mathbb{R}_{\geq 0}\cdot (0,1)$ is that the coefficient of $y$ in $p(x,y,z+\phi_1)$ is zero, i.e
\begin{equation*}
 a_{010} - \frac{a_{000}a_{011}}{a_{001}} = 0,
\end{equation*}
or equivalently
\begin{equation}\label{eq:Cancel1}
\frac{a_{010}}{a_{000}} = \frac{a_{011}}{a_{001}}.
\end{equation}
We assume that equation~\eqref{eq:Cancel1} is satisfied. Then $e_2 = \{(1,0,0),(0,0,1)\}$ is the edge of the Newton polytope of $p(x,y,z+\phi_1)$ that is used to construct the next term of $\phi$. Its slope is $-\mathrm{S}(e_2) = (1,0)$, and the corresponding edge polynomial is $p_{e_2}(t) =  -\frac{a_{000}a_{101}}{a_{001}} +ta_{001}$, and so 
\begin{equation*}
\phi_2 = \frac{a_{000}a_{101}}{a_{001}^2}x.
\end{equation*}
Again, the algorithm predicts that
\begin{equation*}
\mathrm{supp}(\phi) \subseteq \{(0,0)\} \cup ( (1,0) + \mathrm{cone}\{(1,0), (0,1)\}),
\end{equation*}
and again, the condition for no term other than $\phi_2$ having its exponent on $(1,0) + \mathbb{R}_{\geq 0}\cdot (0,1)$ is that the coefficient of $xy$ in 
\begin{align*}
p_2(x,y,z+\phi_1 + \phi_2) = xy \frac{ a_{000}\left( a_{011}a_{101} - a_{001}a_{111}\right)}{a_{001}^2} +\\ 
x^2 \left( \frac{a_{000}a_{101}^2}{a_{001}^2} + y \frac{a_{000}a_{101}a_{111}}{a_{001}^2} \right)+ 
z\left( a_{001} + a_{011}y +  a_{101}x + a_{111}xy\right)
\end{align*}
equals zero.
Assuming that $a_{000}$ is different from zero, this is the case if and only if 
\begin{equation}\label{eq:Cancel2}
\frac{a_{011}}{a_{001}} = \frac{a_{111}}{a_{101}}.
\end{equation}
We assume that in addition to equation~\eqref{eq:Cancel1} also equation~\eqref{eq:Cancel2} holds and define
\begin{equation*}
\lambda := \frac{a_{010}}{a_{000}} = \frac{a_{011}}{a_{001}} = \frac{a_{111}}{a_{101}}.
\end{equation*}
Then 
\begin{align*}
p(x,y,z) &= a_{000} + a_{010}y + z\left( a_{001} + a_{011}y +  a_{101}x + a_{111}xy\right)\\
   &= a_{000} + \lambda a_{000}y + z\left( a_{001} + \lambda a_{001}y +  a_{101}x + \lambda a_{101}xy\right)\\
   &= a_{000} (1+\lambda y) + z\left( a_{001} (1+ \lambda y) +  a_{101}x (1 + \lambda y)\right)\\
   &= (1+\lambda y) \left( a_{000} + z\left( a_{001} +  a_{101}x \right)\right),
\end{align*}
so $p$ is not content-free.
\end{Example}

\section{The support}\label{sub:support}

In Example~\ref{ex:nonUniqueness} we saw that the number of admissible edges of the Newton polytope of a polynomial equation is not necessarily bounded by the number of series solutions the equation has. It can happen that different edges give rise to the same series solution. For the purpose of encoding a series, any edge is as good as any other edge as long as it gives rise to the same series. However, when interested in information about the support of a series solution, it is important to know all the edges which give rise to this series solution. 

\begin{Example}\label{ex:cones}\label{ex:support}
In Example~\ref{ex:nonUniqueness} we saw that the Newton polytope of 
\begin{equation*}
p(x,y,z) = x+y - (1+x+y) z
\end{equation*}
has two admissible edges,
\begin{equation*}
e_1 = \{(0,0,1),(1,0,0)\} \quad \text{and} \quad e_2 = \{(0,0,1),(0,1,0)\},
\end{equation*}
that give rise to two encodings, 
\begin{equation*}
(p, (-1+1/\sqrt{2},-2),x) \quad \text{and} \quad (p, (-2+1/\sqrt{2},-1),y), 
\end{equation*}
of one and the same series solution $\phi$ of $p(x,y,z) = 0$. The barrier cones of these edges are 
\begin{equation*}
C(e_1) = \mathrm{cone}\{(1,0),(-1,1)\} \quad \text{and} \quad  C(e_2) = \mathrm{cone}\{(0,1),(1,-1)\},
\end{equation*}
so that, by Algorithm~\ref{alg:NPA}, we have 
\begin{equation*}
\mathrm{supp}(\phi) \subseteq (1,0) + C(e_1) \quad \text{as well as} \quad \mathrm{supp}(\phi) \subseteq (0,1) + C(e_2).
\end{equation*}

\end{Example}

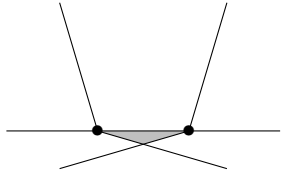
\begin{figure}
\begin{center}
  \begin{tikzpicture}[scale=.2]
     \fill[lightgray] (-3,-3)--(3,-3)--(0,-3.88235)--cycle; 
     \draw [black] (-9,-3)--(9,-3);
     \draw (-3,-3)--(-5.5,5.5);
     \draw (-3,-3)--(5.5,-5.5);
     \draw [black] (-3,-3) node {$\bullet$};
     \draw (3,-3)--(5.5,5.5);
     \draw (3,-3)--(-5.5,-5.5);
     \draw [black] (3,-3) node {$\bullet$};
  \end{tikzpicture}
\end{center}
 \caption{The (only) two vertices $v_1, v_2$ of a (not depicted) convex set $P\subseteq\mathbb{R}^2$, and two strictly-convex cones $C_1, C_2$ attached to them such that $P\subseteq (v_1+ C_1)\cap (v_2 + C_2)$. The complement of the line going through $v_1,v_2$ is the union of two open half-planes. The lower half-plane has bounded intersection with $(v_1 + C_1)\cap(v_2 + C_2)$ and does not contain any vertices of $P$. So $\mathrm{conv}(\{v_1,v_2\}$ is a bounded face of $P$.}
 \label{fig:boundedFace}
\end{figure}

The following proposition and corollary make this more precise. They show that the convex hull of the support of an algebraic series is a polyhedral set and indicate how its bounded faces can be computed.

\begin{Proposition}\label{prop:support}
For every series root $\phi$ of a non-zero square-free polynomial~$p\in\mathbb{K}[\bold{x},y]$, there is a surjection from the set of edges of the Newton polytope of $p$ which give rise to $\phi$ to the set of vertices of the convex hull of its support. 
\end{Proposition}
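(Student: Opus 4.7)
The plan is to define the map from edges to vertices by $e \mapsto v_e := -\mathrm{S}(e)$, the leading exponent of the series produced by Algorithm~\ref{alg:NPA} from $e$. First I would verify that $v_e$ is indeed a vertex of $\mathrm{conv}(\mathrm{supp}(\phi))$: for any $w \in C^*(e)$ with $\mathbb{Q}$-linearly independent components, $v_e$ is the unique $\preceq_w$-maximum of $\mathrm{supp}(\phi)$, and since the support lies in a shift of a line-free cone this maximum is attained on the support, so $v_e$ uniquely maximizes $\langle \cdot, w \rangle$ on $\mathrm{conv}(\mathrm{supp}(\phi))$ as well, which is precisely the defining property of a vertex.

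For surjectivity, I would take an arbitrary vertex $v$ of $\mathrm{conv}(\mathrm{supp}(\phi))$, pick a supporting linear functional $w$ for which $v$ is its unique maximizer, and perturb $w$ inside the relative interior of its normal cone so that its components become linearly independent over $\mathbb{Q}$. Then $\phi$ lies in $\mathbb{K}_{\preceq_w}((\bold{x}))$ and has leading exponent $v$ with respect to $\preceq_w$. Since Algorithm~\ref{alg:NPA} with this $w$ enumerates all series solutions of $p(\bold{x},y)=0$ in $\mathbb{K}_{\preceq_w}((\bold{x}))$, it must reconstruct $\phi$ starting from some edge $e$ of $\mathrm{NP}(p)$ with $w \in C^*(e)$, and comparing leading terms gives $v_e = -\mathrm{S}(e) = v$.

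For the polyhedrality conclusion, the finiteness of the set of edges of $\mathrm{NP}(p)$ together with the surjection forces $\mathrm{conv}(\mathrm{supp}(\phi))$ to have only finitely many vertices $v_1,\dots,v_m$. For each such $v_i$, the barrier cone $C_{e_i}$ supplied by Algorithm~\ref{alg:NPA} is a finitely generated line-free cone with $\mathrm{supp}(\phi) \subseteq v_i + C_{e_i}$, so the recession cone of $\mathrm{conv}(\mathrm{supp}(\phi))$ is contained in the polyhedral cone $\bigcap_i C_{e_i}$. Combined with a finite vertex set, a Minkowski--Weyl decomposition then identifies $\mathrm{conv}(\mathrm{supp}(\phi))$ as a polytope plus a finitely generated cone.

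The hardest step will be the polyhedrality: one must justify not only that the recession cone is contained in $\bigcap_i C_{e_i}$ but is itself finitely generated, i.e.\ that the local cones of the support at the vertices $v_i$ already account for all unbounded directions. This is where I would either invoke the minimality of the barrier cones (Conjecture~\ref{conj:minimality}) or argue directly that every extreme ray of the recession cone arises as a limit of unbounded edges of $\mathrm{conv}(\mathrm{supp}(\phi))$ emanating from some $v_i$, and is therefore dually detected by one of the $C_{e_i}^*$.
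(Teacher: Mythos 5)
Your map $e \mapsto -\mathrm{S}(e)$, the well-definedness argument (the leading exponent is a $\preceq_w$-maximum, hence a vertex of $\mathrm{conv}(\mathrm{supp}\phi)$), and the surjectivity argument (pick a generic $w$ in the normal cone at a given vertex, invoke completeness of the Newton--Puiseux solutions in $\mathbb{K}_{\preceq_w}((\mathbf{x}))$) are exactly the paper's proof. Where you diverge is in taking the ``in particular, polyhedral'' claim seriously: the paper derives it with no further argument, as though finitely many vertices immediately implies a polyhedral convex hull. You are right to be suspicious. Finitely many vertices together with an inclusion of $\mathrm{conv}(\mathrm{supp}\phi)$ into each $v_i + C_{e_i}$ only shows that the recession cone lies inside the polyhedral cone $\bigcap_i C_{e_i}$; it does not by itself show the recession cone is finitely generated, and a sub-cone of a polyhedral cone need not be polyhedral. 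The paper's proof, as written, has the same gap you identify, and the later Problem~\ref{prob:faces} (determining the unbounded faces) implicitly acknowledges that the structure at infinity is not controlled by the vertex argument alone. Your two suggested remedies --- appealing to Conjecture~\ref{conj:minimality}, or showing that every extreme ray of the recession cone is detected by one of the local cones $C_{e_i}$ --- are reasonable but neither is carried out, so your proposal, like the paper's, proves the finiteness of the vertex set cleanly but leaves the polyhedrality conclusion without a complete justification.
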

\begin{proof}
We claim that the function that maps an edge $e$ to the negative of its slope $-\mathrm{S}(e)$ has the required properties. If~$e$ is an edge that gives rise to the series solution $\phi$, then $-\mathrm{S}(e)$ is necessarily a vertex of $\mathrm{Newt}(\varphi)$, the convex hull of its support, as it is the maximal element with respect to a total order induced by some irrational vector. Therefore, the function is well-defined. The function is also surjective, because for every vertex $\alpha$ of $\mathrm{Newt}(\varphi)$ there is some $w\in\mathbb{R}^n$ that induces a total order~$\preceq$ on~$\mathbb{Q}^n$ with respect to which $\alpha$ is the maximal element of $\mathrm{Newt}(\phi)$. In particular, $\phi$ is an element of $\mathbb{K}_{\preceq}((\bold{x}))$. Since all series solutions of $p(\bold{x},y) = 0$ in~$\mathbb{K}_{\preceq}((\bold{x}))$ can be constructed by the Newton-Puiseux algorithm, see Proposition~\ref{prop:algClosure}, there is also an edge $e$ that gives rise to $\phi$. 
\end{proof}

\begin{Corollary}
The convex hull of the support of an algebraic series is polyhedral.
\end{Corollary}
\begin{proof}
The set of edges of $\mathrm{Newt}(p)$ is finite, hence so is the set of vertices of $\mathrm{Newt}(\varphi)$. Any bounded face of $\mathrm{Newt}(\varphi)$ is a bounded face of the convex hull of $\mathrm{vert}(\mathrm{Newt}(\varphi))$. It follows that the set of bounded faces of $\mathrm{Newt}(\varphi)$ is finite too. There is at most one bounded face of the convex hull of $\mathrm{vert}(\mathrm{Newt}(\varphi))$ that is not a bounded face of $\mathrm{Newt}(\varphi)$. So the set of unbounded faces is finite too. 
\end{proof}

The proof of Proposition~\ref{prop:support} indicates how the bounded faces of $\mathrm{Newt}(\varphi)$ can be computed. Identify the admissible edges of $\mathrm{Newt}(p)$ that give rise to $\varphi$. The negative of their slopes are the vertices of $\mathrm{Newt}(\varphi)$. Compute the terms of $\varphi$ supported in the convex hull of $\mathrm{vert}(\mathrm{Newton}(\varphi))$. If they make up all of $\varphi$, then $\varphi$ is a Puiseux polynomial, and $\mathrm{Newt}(\varphi)$ equals the convex hull of its vertices. If there is a term that is not  
supported on the convex hull of $\mathrm{vert}(\mathrm{Newt}(\varphi))$, then consider any line segment connecting its exponent with a point in the interior of the convex hull of $\mathrm{vert}(\mathrm{Newt}(\varphi))$. It intersects a face of the convex hull of $\mathrm{vert}(\mathrm{Newton}(\varphi))$. It is the only face that is not a bounded face of $\mathrm{Newt}(\varphi)$. 

The identification of the bounded faces of $\mathrm{Newt}(\varphi)$ of codimension $<n$ relies on the computation of terms of $\varphi$. The computation of (some of) them can sometimes be avoided, given that for any vertex~$v$ of $\mathrm{Newt}(\varphi)$ one can compute a strictly convex cone $C_v$ such that $\mathrm{supp}(\phi)\subseteq v + C_v$. Recall that for each edge $e$ that gives rise to $\phi$ and for every~$w\in C^*(e)$ that defines a total order, Algorithm~\ref{alg:NPA} provides its first $N$ terms $a_1\bold{x}^{\alpha_1},\dots,a_N\bold{x}^{\alpha_N}$ with respect to $w$ and a strictly convex cone $C$ compatible with $w$ such that $\mathrm{supp}(\phi)\subseteq \{\alpha_1,\dots,\alpha_{N-1}\}\cup (\alpha_N + C)$. The cone $C_{\alpha_1}$ generated by $C$ and~$\{\alpha_2-\alpha_1,\dots,\alpha_N-\alpha_1\}$ has the property that $\mathrm{supp}(\phi)\subseteq \alpha_1 + C_{\alpha_1}$. It is also strictly convex since it is compatible with~$w$.

\begin{Example}\label{ex:supportCont}
We continue with Example~\ref{ex:support}. Proposition~\ref{prop:support} implies that the vertices of the convex hull of the support of $\phi$ are $v_1 = (1,0)$ and $v_2 = (0,1)$. Apart from the vertices $v_1$ and $v_2$ itself, the only possible bounded face is the convex hull of $v_1$ and $v_2$. Since
$\mathrm{supp}(\phi)\subseteq v_1 + \mathrm{cone}\{(1,0), (-1,1)\}$ and $\mathrm{supp}(\phi) \subseteq v_2 + \mathrm{cone}\{(0,1), (1,-1)\}$,  
the line through $v_1$ and $v_2$ supports $\mathrm{conv}(\mathrm{supp}(\phi))$, hence $\mathrm{conv}(\{v_1,v_2\})$ is a bounded face. See also Figure~\ref{fig:boundedFace} for an illustration of a slightly different example.
\end{Example}

The observation made in Example~\ref{ex:supportCont} and illustrated in Figure~\ref{fig:boundedFace} motivates the following proposition.
\begin{Proposition}\label{prop:face}
Let $V$ be a subset of $\mathrm{vert}(\mathrm{Newt}(\phi))$, and for each $v\in V$, let $C_v$ be a strictly convex cone such that $\mathrm{supp}(\phi)\subseteq v+ C_v$. Assume that there is a hyperplane which contains $V$ but no other vertices of $\mathrm{Newt}(\phi)$. Furthermore, assume that its complement is the union of two half-spaces one of which has bounded intersection with $\bigcap_{v\in V} (v+C_v)$ and does not contain any vertices of $\mathrm{Newt}(\phi)$. Then $\mathrm{conv}(V)$ is a bounded face of $\mathrm{Newt}(\phi)$.
\end{Proposition}
\begin{proof}
If $\mathrm{conv}(V)$ were not a (bounded) face of $\mathrm{Newt}(\phi)$, neither of the two (open) half-spaces would have a trivial intersection with $\mathrm{Newt}(\phi)$. In particular, the bounded intersection of one of them with $\bigcap_{v\in V} (v+C_v)$ would be non-empty, and hence contain a vertex of $\mathrm{Newt}(\phi)$. A contradiction to the assumptions, so the hyperplane supports $\mathrm{Newt}(\phi)$, and $\mathrm{conv}(V)$ is a bounded face. 
\end{proof}

It can happen that for each hyperplane that contains $V$ but no other vertices of $\mathrm{supp}(\varphi)$ the half-spaces that make up its complement have unbounded intersection with $\bigcap_{v\in V} (v+C_v)$ or contain vertices of $\mathrm{conv}(\mathrm{supp}(\phi))$ different from $V$. 

\begin{Example}
Consider $P = \mathrm{conv}\{(-1,0), (1,0), (0,-1) \}$, and the cones $C_{(-1,0)} = \mathrm{cone}\{(-1,2), (1,-1)\}$ and $C_{(1,0)} = \mathrm{cone}\{(1,2), (-1,-1)\}$. The line $\mathbb{R}\cdot (1,0)$ is tangent to $P$ at $\mathrm{conv}\{(-1,0), (1,0)\}$. However, the corresponding upper half-plane has unbounded intersection with the intersection of $(-1,0) + C_{(-1,0)}$ and $(1,0) + C_{(1,0)}$ while the lower half-plane contains the vertex $(0,-1)$.
\end{Example}

We pointed out that for deciding whether two algebraic series are equal or not, it is convenient that the cones output by Algorithm~\ref{alg:NPA} are not too big. The problem of computing the unbounded faces of $\mathrm{Newt}(\varphi)$ raises the question whether they are even minimal. The following example shows that this does not need to be the case.
\begin{Example}
Let $\mathbb{K}_{\preceq}((\bold{x}))$ be the field of Puiseux series induced by $w = (-\sqrt{2},-1)$. The series solution of
 \begin{equation*}
  p(x,y,z) = (1-x)((1-y)z-1) = 0, 
 \end{equation*}
therein is the geometric series 
\begin{equation*}
 \phi = 1 + y + y^2 + \dots  
\end{equation*}
Though the convex hull of its support is the cone generated by $(1,0)$, Algorithm~\ref{alg:NPA} only shows that 
\begin{equation*}
 \mathrm{supp}(\phi) \subseteq \mathrm{cone}\{(1,0), (0,1)\}.
\end{equation*}
The difference between the two cones is caused by the polynomial~$p\in\mathbb{K}[x,y][z]$ not being content-free: first getting rid of its content, and then applying Algorithm~\ref{alg:NPA} results in a cone that is minimal. 
\end{Example}
The non-primitivity is not the only possible reason for a cone output by Algorithm~\ref{alg:NPA} not being minimal.

\begin{Example}\label{ex:minimal0}
One of the solutions of 
\begin{equation*}
 p(x,y,z )= 1+x+y + (1 + xy +2y)z +yz^2 = 0
\end{equation*}
is
\begin{equation*}
 \frac{-1-2y-xy + \sqrt{1-2xy+4xy^2+x^2y^2}}{2y}.
\end{equation*}
Let $\varphi$ be its series expansion whose first terms with respect to $w = (-1+1/\sqrt{2},-1)$ are
\begin{equation*}
-1-x+xy+x^2y^2 + \dots. 
\end{equation*}
The closed form of $\phi$ together with Newton's binomial theorem implies that the minimal cone containing $\mathrm{supp}(\phi)$ is $\mathrm{cone}\{(1,0), (1,2)\}$, though Algorithm~\ref{alg:NPA} only shows that it is contained in $\mathrm{cone}\{(1,0), (0,1)\}$. However, the algorithm also shows that
\begin{equation*}
 \mathrm{supp}(\phi) \subseteq \{(0,0)\} \cup ( (1,0) + \mathrm{cone}\{(1,1), (0,1)\}),
\end{equation*}
where now $\mathrm{cone}\{(1,1), (0,1)\}$ is the minimal cone having this property, and computing another term and another cone, we find that
\begin{equation*}
 \mathrm{supp}(\phi) \subseteq \{(0,0), (1,0)\} \cup ((1,1) + \mathrm{cone}\{(1,1), (1,2)\}), 
\end{equation*}
where the cone $\mathrm{cone}\{(1,1), (1,2)\}$ is not only minimal but also has the property that its sides~$(1,1)+\mathbb{R}_{\geq 0}\cdot (1,1)$ and $(1,1) + \mathbb{R}_{\geq 0}\cdot (1,2)$ contain infinitely many elements of~$\mathrm{supp}(\phi)$.
\begin{figure}
\begin{center}
  \begin{tikzpicture}[scale=.2]
   \fill[gray!45] (-3,-3)--(5.5,-3)--(5.5,5.5)--(-3,5.5)--cycle;
   \fill[gray!75] (-1,-3)--(5.5,-3)--(5.5,5.5)--(-1,5.5)--cycle;
    \fill[black!65] (-1,-1)--(5.5,5.5)--(9/4,5.5)--cycle;
    \draw (11/2,-5.4896)--(-3,-3)--(-11/2,-2.268);
    \draw[->](-6.5,-3)--(6.5,-3);
    \draw[->](-3,-6.5)--(-3,6.5);
    \foreach \x/\y in {
    -3/-3, -1/-3, -1/-1, 1/1, 1/3
   } \draw (\x,\y) node {$\bullet$};
  \end{tikzpicture}
\end{center}
 \caption{The first terms of the series $\phi$ from Example~\ref{ex:minimal1} and the (shifted) cones, as output by the Newton-Puiseux algorithm, that contain the support of the subsequent terms.}
 \label{fig:minimalCone}
\end{figure}
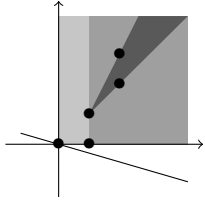

\end{Example}

For the series in the previous examples we could easily decide whether the corresponding cones given by the Newton-Puiseux algorithm were minimal or not, because the series were algebraic of degree~$1$ and~$2$, respectively, and therefore very explicit. It remains to clarify how this can be decided in general. Let $a_1 \bold{x}^{\alpha_1},\dots, a_N \bold{x}^{\alpha_N}$ be the first terms of $\phi$, and let~$C$ be the cone output by Algorithm~\ref{alg:NPA} such that $\mathrm{supp}(\phi) \subseteq \{\alpha_1,\dots,\alpha_{N-1}\}\cup (\alpha_N+C)$. If for each edge of $\alpha_N+C$ the exponent $\alpha_N$ is not the only element of $\mathrm{supp}(\phi)$ it contains, then $C$ is minimal and its minimality can be verified simply by determining some of these elements. 

\begin{Example}\label{ex:minimal1}
We continue Example~\ref{ex:minimal0}. The cone $\mathrm{cone}\{(1,1), (1,2)\}$ for which $\mathrm{supp}(\phi)\subseteq \{(0,0), (1,0)\}\cup ((1,1)+ \mathrm{cone}\{(1,1), (1,2)\})$ is minimal because the first terms of $\phi$ with respect to $w = (-1+1/\sqrt{2},-1)$ are
\begin{equation*}
-1 - x + x y + x^2 y^2 - x^2 y^3 +\dots 
\end{equation*}
and $(2,2)\in(1,1)+\mathbb{R}_{>0} \cdot (1,1)$ and $(2,3) \in(1,1) + \mathbb{R}_{>0} \cdot (1,2)$. See Figure~\ref{fig:minimalCone}.
\end{Example}

Though this is obviously a sufficient condition, it is not a necessary one. It can happen that the cone $C$ output by Algorithm~\ref{alg:NPA} is minimal but $\alpha_N$ is the only element of $\mathrm{supp}(\phi)$ that lies on an edge of $\alpha_N + C$. 

\begin{Example}
Given the polynomial $p(x,y,z) = (1-x)(z-y)-1$, the edge $e=\{(0,0,1), (0,1,0)\}$ of its Newton polytope, the vector $w = (-1,\sqrt{2})$ and $k=0$, Algorithm~\ref{alg:NPA} returns $y$, the first term of the series root $\phi = y + \sum_{n=0}^\infty x^n$, and $C = \mathrm{cone}\{(1,0),(0,-1)\}$. Though $C$ is minimal, the only element of the support of $\phi$ that lies on $(0,1) + \mathbb{R}\cdot (1,0)$ is $(0,1)$.
\end{Example}

Proving the minimality of a cone output by Algorithm~\ref{alg:NPA} in such situations relates to the following open problem. 

\begin{Problem}\label{prob:faces}
Given an algebraic series in terms of its minimal polynomial, a total order and its first terms with respect to this order, determine the unbounded faces of the convex hull of its support.
\end{Problem}

We refer to~\cite[Theorem~1.5]{aroca2022minimal} for further (yet not effective) results on the structure of the convex hull of the support of series algebraic over a field of Laurent series. 

\section{Rational solutions}

We explain how to determine the rational solutions of a polynomial equation. It turns out to be easy to verify whether a series is polynomial once a bound on the set of vertices of the convex hull of its support is known. 
\begin{Proposition}
A series $\varphi$ is a (Puiseux) polynomial if and only if $\mathrm{supp}(\varphi)$ is a subset of the convex hull of $\mathrm{vert}(\mathrm{Newt}(\varphi))$.
\end{Proposition}

The numerator of any rational solution of a polynomial equation divides the latter's leading coefficient. By multiplying with the leading coefficient, any rational solution is transformed into a polynomial solution of an associated polynomial equation. We have thereby proven the following.

\begin{Corollary}
A series $\varphi$ is a rational solution of $p = 0$ if and only if $\mathrm{lc}_y(p) \varphi$ is a polynomial solution of $p(y/\mathrm{lc}_y(p)) = 0$.
\end{Corollary}

We refer to~\cite{alonso1992computational} and Proposition~2.4 and~Corollary~2.5 therein for how to identify polynomial and rational series solutions based on the implicit function theorem.
    
\section{The number of series solutions}

Part of the input of the Newton-Puiseux algorithm as specified in Section~\ref{sec:prelim} are a polynomial, an admissible edge $e$ of its Newton polytope and a vector $w$ of the dual of its barrier cone inducing a total oder on $\mathbb{Q}^n$. It is natural to ask how the series constructed from $e$ depend on $w$, that is, whether $w$ only affects the order in which the terms of the series are constructed. It turns out that in certain conditions the answer is affirmative.

\begin{Proposition}\label{prop:order}
Let $e$ be an edge of the Newton polytope of a square-free polynomial $p\in\mathbb{K}[\bold{x},y]$ whose associated edge polynomial $p_{e}(t)$ is square-free too. Then the set of series roots of $p$ which result from $e$ and an element $w\in C^*(e)$ that induces a total order on $\mathbb{Q}^n$ does not depend on the choice of $w$.
\end{Proposition}
\begin{proof}
Let $w_1, w_2$ be two elements of $C^*(e)$ whose components are linearly independent over $\mathbb{Q}$, and let $\leq_1$ and $\leq_2$ be the corresponding total orders on $\mathbb{Q}^n$. The field $\mathbb{K}_{\leq_1}((\bold{x}))$ contains $\deg_y(p)$ many series roots of which precisely the $\mathrm{M}(e)_{n+1}-\mathrm{m}(e)_{n+1}$ many series resulting from $e$ and $w_1$ have the leading exponent $-\mathrm{S}(e)$. The edge polynomial $p_e(t)$ associated with $e$ is square-free, so the supports of these series are contained in the barrier cone $C(e)$ of $e$. The barrier cone is independent of $w_1$. Hence, the supports of the series roots constructed from $e$ and $w_2$ are contained in $C(e)$. So these series are elements of $\mathbb{K}_{\leq_1}((\bold{x}))$ too. Since $\mathbb{K}_{\leq_1}((\bold{x}))$ cannot contain more than $\deg_y(p)$ many series solutions and because the leading exponents of the series roots constructed from $e$ and $w_2$ are $-\mathrm{S}(e)$, the series roots constructed from $e$ and $w_1$ and $e$ and $w_2$ are the same.
\end{proof}

The following example\footnote{This example was pointed out by one of the reviewers to whom we express our sincere thanks.} shows that the assumption of the Proposition cannot be weakened: if the edge polynomial associated with the edge $e$ of the Newton polytope of a polynomial is not square-free, then the set of series constructed from $e$ may depend on $w\in C^*(e)$.

\begin{Example}\label{ex:complicated}
The series roots of $p(x,y,z) := 1+x+y+2z+z^2$ are
\begin{equation*}
\phi_{1,2} = -1 \pm iy^{1/2} \sum_{k=0}^\infty \binom{1/2}{k}x^ky^{-k} \quad \text{and} \quad  \phi_{3,4} = -1 \pm ix^{1/2} \sum_{k=0}^\infty \binom{1/2}{k}x^{-k}y^k.
\end{equation*}
The Newton polytope of $p$ has three admissible edges one of which is $e = \{(0,0,2), (0,0,0)\}$. Its barrier cone is $C(e) = \mathrm{cone}\{(1,0), (0,1)\}$, hence $C^*(e) = \mathrm{cone}\{(-1,0), (0,-1)\}$. The supports of the series roots of $p$ imply that $C^*(e)$ decomposes into the union of $C_1 = \mathrm{cone}\{(-1,-1), (0,-1)\}$ and $C_2 = \mathrm{cone}\{(-1,0), (-1,-1)\}$ such that the following holds: if $w\in C_1$, then the series constructed from $e$ and $w$ are $\phi_1$ and $\phi_2$. However, if $w\in C_2$, then these series are $\phi_3$ and $\phi_4$. 
\end{Example}

Given an edge $e$ of $\mathrm{Newt}(p)$, Example~\ref{ex:complicated} shows that to see whether $-\mathrm{S}(e)$ is a vertex of $\mathrm{Newt}(\varphi)$ it is not enough to pick any $w\in C^*(e)$ that induces a total order on $\mathbb{Q}^n$ and to verify if $e$ and $w$ give rise to $\phi$. The series constructed from $e$ may depend on $w$, and a priori it is not clear how $w$ should be chosen.  
So how could the set of vertices of $\mathrm{Newt}(\varphi)$ be computed instead? The set of vertices is a subset of 
$S :=\left\{ -\mathrm{S}(e) : e \text{ an admissible edge of } \mathrm{Newt}(p) \right\}$. By computing the terms of $\varphi$ that are supported in $\mathrm{conv}(S)$ and estimates of the support of its remaining terms, one might be able to identify the vertices of $\mathrm{Newt}(\varphi)$. 

Proposition~\ref{prop:order} implies that under the condition that the edge polynomials of the admissible edges of the Newton polytope of a polynomial are square-free, the polynomial has only finitely many series roots. We believe that the latter is true in general. Despite of Example~\ref{ex:complicated}, we also believe that the following conjecture holds.

\begin{Conjecture}\label{conj:cardinality}
For each edge $e$ of the Newton polytope of $p\in\mathbb{K}[\bold{x},y]$, let $w_e$ be any element of $C^*(e)$ which induces a total order on $\mathbb{Q}^n$. Then the list of series roots of $p$ that results from $e$ and $w_e$ when $e$ ranges over all admissible edges of the Newton polytope of $p$ is exhaustive. In particular, $p$ has only finitely many series roots.
\end{Conjecture}

It is natural to ask how the number of series solutions of a polynomial equation can be described.
\begin{Problem}\label{prob:number}
Find a formula for the number of series solutions of a polynomial equation. Can it be related to any statistics of its Newton polytope?
\end{Problem}
The next Proposition provides an answer to Problem~\ref{prob:number} for polynomials of degree $1$.
\begin{Proposition}\label{prop:number}
Let $p,q\in\mathbb{K}[\bold{x}]$ and let $\preceq$ be an additive total order. The series solution of 
\begin{equation*}
p(\bold{x}) - q(\bold{x}) y = 0
\end{equation*}
in~$\mathbb{K}_{\preceq}((\bold{x}))$ depends on $\preceq$ only to the extent of what the leading term $\mathrm{lt}_{\preceq}(q)$ of $q$ with respect to $\preceq$ is. It is 
\begin{equation}\label{eq:geometric}
\frac{p}{\mathrm{lt}_{\preceq}(q)} \sum_{k=0}^\infty \left(1-\frac{q}{\mathrm{lt}_{\preceq}(q)}\right)^k,
\end{equation}
and its support is contained in a shift of the cone generated by the support of $q/\mathrm{lt}_{\preceq}(q)$.
In particular, there is a bijection between the series solutions of the equation and the vertices of the Newton polytope of~$q$. 
\end{Proposition}
\begin{proof}
Consider the Laurent polynomial $q/\mathrm{lt}_{\preceq}(q)$, and let $C$ be the (strictly convex) cone generated by its support. Since $q/\mathrm{lt}_{\preceq}(q)$ is an element of $\mathbb{K}_C[[\bold{x}]]$, and because its constant term is different from zero, it has a multiplicative inverse in $\mathbb{K}_C[[\bold{x}]]$, see~\cite[Theorem 12]{monforte2013formal}. Since~$\mathbb{K}_C[[\bold{x}]]$ is an integral domain~\cite[Theorem 11]{monforte2013formal}, this multiplicative inverse is unique. As a consequence, the series solution of $p(\bold{x})+q(\bold{x})y$ in $\mathbb{K}_{\preceq}((\bold{x}))$ depends on the total order $\preceq$ only to the extent of what $\mathrm{lt}_{\preceq}(q)$ is. The terms of $q$ that appear as the leading term with respect to some additive total order are those whose exponent is a vertex of the Newton polytope of $q$. The number of series solutions is therefore bounded by the number of vertices the Newton polytope has. To see that these numbers are equal, it is sufficient to observe that the series solution in $\mathbb{K}_{\preceq}((\bold{x}))$ is the geometric series expansion of $p/q$ given by~(\ref{eq:geometric}), and to note that for an additive total order for which $q$ has another leading term the corresponding series is different. The statement about the support is obvious from the explicit expression of the series.
\end{proof}

\begin{Example}\label{ex:inv}
The equation
\begin{equation*}
x+y-(1+x+y)z = 0
\end{equation*} 
has three series solutions as the Newton polytope of $1+x+y$ has three vertices:
$(x+y) \sum_{k=0}^\infty (-1)^k(x+y)^k$, $\frac{x+y}{x} \sum_{k=0}^\infty (-1)^k\left(\frac{1+y}{x}\right)^k$ and
$\frac{x+y}{y} \sum_{k=0}^\infty (-1)^k\left(\frac{1+x}{y}\right)^k$. Their supports are contained in shifts of $\mathrm{cone}\{(1,0), (0,1)\}$, $\mathrm{cone}\{ (-1,0), (-1,1) \}$ and $\mathrm{cone}\{ (0,-1), (1,-1)\}$, respectively.
\end{Example}

\section{Effective arithmetic}

We begin with the observation that the sum and product of two algebraic series need not be algebraic.

\begin{Example}\label{ex:welldefinedness}
The geometric series 
\begin{equation*}
\phi_1 = 1 + x + x ^2 + \dots \quad \text{and} \quad \phi_2 = - x^{-1} - x^{-2} - x^{-3} - \dots
\end{equation*}
are both algebraic as they are the series roots of 
\begin{equation*}
p(x,y) := (1-x) y -1,
\end{equation*}
but neither is their sum nor their product as none of them is well-defined.
\end{Example}

The sum and product of two algebraic series $\phi_1$ and $\phi_2$ is algebraic if and only if there is a vector~$w\in\mathbb{R}^n$ that is compatible with both $\phi_1$ and $\phi_2$.
In the following we assume we know a $w\in\mathbb{R}^n$ that induces a total order~$\preceq$ such that $\phi_1,\phi_2 \in\mathbb{K}_{\preceq}((\bold{x}))$, and discuss how to determine finite encodings for $\phi_1 + \phi_2$ and $\phi_1 \phi_2$, given encodings of $\phi_1$ and $\phi_2$.

Assume that $\phi_1$ and $\phi_2$ are two series given by $(p_1, w, q_1)$ and $(p_2,w,q_2)$. An annihilating polynomial $p$ for $\phi_1 + \phi_2$ can be derived from annihilating polynomials $p_1$ and~$p_2$ of $\phi_1$ and $\phi_2$ by computing a generator of the elimination ideal of 
\begin{equation*}
\langle p_1(\bold{x},y_1),\ p_2(\bold{x},y_2),\  y_1+y_2-y_3\rangle \cap \mathbb{K}(\bold{x})[y_3].
\end{equation*}
Whether a series root of $p$ represented by $(p,w,q)$ equals $\phi_1 + \phi_2$ can be decided by computing the truncations $\tilde{q}_1$ and~$\tilde{q}_2$ of $\phi_1$ and $\phi_2$ up to order $\mathrm{ord}(q,w)$, where 
\begin{equation*}
\mathrm{ord}(q,w) := \min \{\alpha\cdot w \;|\; \alpha \in\mathrm{supp}(q)\}.
\end{equation*}
If $\tilde{q}_1 + \tilde{q}_2$ does not equal $q$ when ordered with respect to $w$, the series represented by $(p,w,q)$ does not equal $\phi_1+\phi_2$. But if it does, then $(p,w,q)$ is a finite encoding of $\phi_1+\phi_2$, and by assumption we can be sure that we find a finite encoding in this way.

Similarly, an annihilating polynomial $p$ for $\phi_1\phi_2$ can be determined by computing a generator of the elimination ideal
\begin{equation*}
\langle p_1(\bold{x},y_1),\ p_2(\bold{x},y_2),\ y_1y_2-y_3\rangle \cap \mathbb{K}(\bold{x})[y_3].
\end{equation*}
To find a representation $(p,w,q)$ of $\phi_1\phi_2$ compute the truncation $\tilde{q}_1$ of $\phi_1$ up to order~$\mathrm{ord}(q,w) -  w\cdot \mathrm{lexp}_w(\phi_2)$ and the truncation $\tilde{q}_2$ of $\phi_2$ up to order $\mathrm{ord}(q,w) - w \cdot \mathrm{lexp}_w(\phi_1)$, where $\mathrm{lexp}_w(\phi_i)$ denotes the leading exponent of $\phi_i$ with respect to the total order induced by $w$. If $q$ does not equal the sum of the first terms of $\tilde{q}_1 \tilde{q}_2$ when ordered with respect to $w$, the series represented by~$(p,w,q)$ does not equal $\phi_1\phi_2$. However, if it does, then $(p,w,q)$ is a finite encoding of $\phi_1 \phi_2$, and by assumption we can be sure that we find a finite encoding in this way.

Other closure properties for algebraic series such as taking multiplicative inverses or derivatives can be performed similarly. We just refer to~\cite[Theorem 6.3]{kauers2011concrete} for how to compute the corresponding annihilating polynomials.

\section*{Acknowledgement}
Thanks go to the Austrian FWF, the Johannes Kepler University Linz and the state of Upper Austria which supported this work with the grants F5004, P31571-N32 and LIT-2022-11-YOU-214, respectively. Thanks go also to the author's colleagues at the Johannes Kepler University Linz and the Austrian Academy of Sciences, in particular to Rapha\"{e}l Pag\`{e}s, for the discussions we had. Thanks to the reviewers for their careful reading and their comments.  

\bibliographystyle{plain}
\bibliography{newtonPuiseux}

\end{document}